\documentclass[11pt]{amsart}
\usepackage[T1]{fontenc}
\usepackage{microtype}
\usepackage{amsmath,amssymb,amsthm,mathtools}
\usepackage{booktabs,tabularx,array,float}
\usepackage{caption}
\usepackage{xcolor}
\usepackage[colorlinks=true,linkcolor=blue!50!black,citecolor=blue!50!black,urlcolor=blue!50!black]{hyperref}

\newtheorem{theorem}{Theorem}
\newtheorem{proposition}[theorem]{Proposition}
\newtheorem{lemma}[theorem]{Lemma}
\newtheorem{remark}[theorem]{Remark}
\newtheorem*{remark*}{Remark}

\newcommand{\F}{\mathbb{F}}
\newcommand{\wt}{\operatorname{wt}}
\newcommand{\Span}{\operatorname{span}}
\newcommand{\Cay}{\operatorname{Cay}}

\title[A new lower bound for the kissing number in 19 dimensions]%
{A new lower bound for the\\ kissing number in 19 dimensions}
\author{Boon Suan Ho}
\date{}
\address{Department of Mathematics, National University of Singapore}
\email{\href{mailto:hbs@u.nus.edu}{hbs@u.nus.edu}}

\begin{document}

\begin{abstract}
We prove that the kissing number in 19 dimensions is at least 11948, improving the
bound of Cohn and Li by 256. By the odd-sign construction of Cohn and Li, it is enough
to find a binary code of length 19 and minimum distance 5 inside the ambient
5-punctured extended binary Golay code. We construct such a code explicitly, of size
1280. The construction is organized around a chain of linear codes
\[
  M\le K\le D,
  \qquad |M|=64,
  \qquad |K/M|=16,
  \qquad |D/K|=4.
\]
The 21 words of $D$ of weight 3 or 4 lie in exactly five nonzero $M$-cosets inside $K$.
Those five cosets define a Cayley graph on $K/M\cong\F_2^4$ with connection set
$\{e_1,e_2,e_3,e_4,e_1+e_2+e_3+e_4\}$, hence the Clebsch graph. A 5-coclique in that
quotient lifts first to a 320-word code in $K$ and then, by taking all four cosets of
$K$ in $D$, to the desired 1280-word code.
\end{abstract}

\maketitle
\thispagestyle{empty}

\section{Introduction}

The kissing number problem, and its connections with sphere packings and error-correcting
codes, has a long history. Classical references include Leech's early notes on sphere
packings \cite{Leech}, the coding-theoretic viewpoint of Leech and Sloane
\cite{LeechSloane}, and the broader treatment in Conway and Sloane
\cite{ConwaySloane}. For current background and tables of the best known bounds, see
also Cohn's online survey page and compiled table \cite{CohnWeb,CohnTable}.

Cohn and Li proved that
\[
  k(19) \ge 11692
\]
by starting from a 10668-point kissing configuration in $\mathbb R^{19}$ and adjoining
additional points indexed by a binary code in the orthogonal complement of the code
generated by the 5-shortened $S(5,8,24)$ Steiner system; see \cite[\S2]{CohnLi}. Since
shortening and puncturing are dual operations, and the shortened blocks span the
corresponding shortened Golay code, this ambient orthogonal complement is a 5-punctured
extended binary Golay code. Every code contained in this ambient code and having minimum
distance at least $5$ contributes one additional kissing vector per codeword. Cohn and Li
used a linear code of size $1024$.

Accordingly, it is enough to find a larger code with the same length and minimum
distance inside a concrete coordinate model of a 5-punctured extended binary Golay code.
The point of this note is to do exactly that: we construct explicitly a nonlinear code
$A\subseteq\F_2^{19}$ with $|A|=1280$ and minimum distance $5$ inside such a code. This
yields the following result.

\begin{theorem}\label{thm:main}
The kissing number in 19 dimensions satisfies
\[
  k(19) \ge 11948.
\]
\end{theorem}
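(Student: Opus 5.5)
The theorem reduces to an explicit code construction via the odd-sign construction of Cohn and Li \cite[\S2]{CohnLi}. That construction starts from a $10668$-point kissing configuration in $\mathbb R^{19}$ and adjoins one kissing vector for each word of a binary code $A\subseteq\F_2^{19}$. The requirements are that $A$ lies inside the ambient 5-punctured extended Golay code $G$ and has minimum distance at least $5$. Hence $k(19)\ge 10668+|A|$. Cohn and Li used $|A|=1024$. Since $11948-10668=1280$, it suffices to exhibit such an $A$ with $|A|=1280$.

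I would first fix a concrete coordinate model of $G$ by choosing a generator matrix of the extended Golay code and deleting five coordinates. I would also verify that this model is the orthogonal complement used in \cite{CohnLi}; that identification is the duality remark made above. Next I would choose a linear subcode $D\le G$ of dimension $12$ together with a chain $M\le K\le D$ with $\dim M=6$ and $\dim K=10$. The aim is that the minimum distance of $M$ is at least $5$ and that every word of $D$ of weight $3$ or $4$ lies in $K$. Concretely, the claim is that the $21$ words of weights $3,4$ in $D$ fall into exactly five nonzero cosets of $M$. Under an identification $K/M\cong\F_2^4$, these cosets should be $e_1,e_2,e_3,e_4,e_1+e_2+e_3+e_4$. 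Words of weight $1$ or $2$ cannot occur, because $G$ is a punctured Golay code of distance $8-5=3$. So the only short words to control are those of weights $3$ and $4$, and this property is a finite weight-enumeration check on an explicit basis.

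Given this structure, the key observation is the following. For $x,y\in D$, the difference $x-y=x+y$ has weight $<5$ exactly when it is one of those $21$ words. That forces $x+y\in K$, and in that case $x+y$ lies in one of the five bad $M$-cosets. So I would pick a set $S\subseteq K/M$ in which no two elements differ by a connection element. In other words, $S$ is a coclique in the Cayley graph $\Cay(\F_2^4,\{e_1,e_2,e_3,e_4,e_1+e_2+e_3+e_4\})$, which is the Clebsch graph. Its independence number is $5$, attained for instance by a vertex neighbourhood $\{e_1,e_2,e_3,e_4,e_1+e_2+e_3+e_4\}$; this works because the graph is triangle-free.

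I would then set $A=\bigcup_{s\in S}\bigcup_{c\in D/K}(s+M+c)$, with $s$ lifted to $K$. This gives $|A|=5\cdot 64\cdot 4=1280$. To check the minimum distance, take distinct $x,y\in A$. If they lie in different $K$-cosets, then $x+y\notin K$ and so $\wt(x+y)\ge5$. If they lie in the same $K$-coset, then $x+y\in s+s'+M$. When $s=s'$ this sum is in $M$, so the weight is at least $5$. When $s\ne s'$, the element $s+s'$ is not a connection element, so the sum lies in no bad coset, and again the weight is at least $5$.

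The main obstacle is finding $D$, $K$ and $M$ explicitly, with $D$ of dimension $12$ inside the dimension-$14$ code $G$, so that all short words collapse into five cosets with the Clebsch pattern. I would search computationally. First I would enumerate the weight-$3$ and weight-$4$ words of $G$. Then I would look for a codimension-$2$ subcode $D$ that keeps only about $21$ of them, and take $K$ to be their span together with $M$. The resulting verification is routine finite computation, which I would present as explicit basis matrices together with a weight check.
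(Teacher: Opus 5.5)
\textbf{Verdict: genuine gap.} Your reduction and lifting argument match the paper's: the Cohn--Li odd-sign construction, the chain $M\le K\le D$, the Clebsch quotient, the coclique $I=\Sigma$, and the lift first to $B$ and then to the four $K$-cosets. Your final distance check is also correct. The problem is that the one substantive fact is never established. You state it as ``the claim'': that some $6$-dimensional $M$ with $M\cap S=\varnothing$ places the $21$ low-weight words into five nonzero cosets with images $e_1,e_2,e_3,e_4,e_1+e_2+e_3+e_4$. You then defer it to an unspecified search. That fact is the entire content of the theorem beyond Cohn--Li, and the exact relation among the five cosets matters. If the five bad cosets satisfied a three-term relation $c_1+c_2=c_3$, every coset of $\{0,c_1,c_2,c_3\}$ would be a $4$-clique in the quotient graph. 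This lifting scheme could then give at most $4\cdot 64\cdot 4=1024$ words. The paper closes this by writing down $m_1,\dots,m_6,s_1,\dots,s_4,r_1,r_2$ explicitly and listing the $21$ words in Table~\ref{tab:lowweight}. It then verifies the structure in Proposition~\ref{prop:structure}: each $m_i$ is a sum of two table words from the same row, and $s_5=s_1+s_2+s_3+s_4+m_4+m_6$.

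Your search plan also rests on a false premise. The $5$-punctured extended Golay code has dimension $12$, not $14$. Puncturing a $[24,12,8]$ code on $5<8$ coordinates is injective. Dually, the $5$-shortened code has dimension $12-5=7$, so its dual in $\F_2^{19}$ has dimension $19-7=12$. So there is no codimension-$2$ subcode to choose: $D$ is the whole ambient code. It already contains exactly $21$ words of weight $3$ or $4$: one from the unique octad through the five deleted points, and $5\cdot 4=20$ from octads meeting them in exactly four points. What actually has to be found is $M$, and your plan gives no method for that.

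A workable route is as follows. Two low-weight words in the same $M$-coset differ by an element of $M$, so $M$ must contain all within-class differences. Group the $20$ weight-$4$ words by which $4$-subset of deleted points their octad contains. These classes are sextet tetrads, and the four tetrads in each class partition the complement of the weight-$3$ word. Attach the weight-$3$ word to the class of $s_1$, as in Table~\ref{tab:lowweight}. The span of the within-class differences is then exactly the paper's $M$. You must still check three things: $\dim M=6$, $M\cap S=\varnothing$, and that the only relation among the five classes modulo $M$ is the five-term one.
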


The construction is easiest to read from the chain
\[
  M\le K\le D,
  \qquad |M|=64,
  \qquad |K/M|=16,
  \qquad |D/K|=4.
\]
Here $D$ is the ambient punctured Golay code, $K$ is spanned by the low-weight words of
$D$, and $M$ is a 6-dimensional subcode of $K$. Since every nonzero word of $D$ has
weight at least $3$, the only forbidden differences for a distance-5 code are the 21
words of $D$ of weight $3$ or $4$; these lie in exactly five nonzero $M$-cosets inside
$K$. Those five cosets define a 16-vertex Cayley graph on $K/M\cong\F_2^4$, namely the
Clebsch graph. A 5-coclique $I$ in that graph lifts in two stages:
\[
  \underbrace{I\subseteq K/M}_{5\text{ vertices}}
  \Longrightarrow
  \underbrace{B\subseteq K}_{5\cdot 64=320\text{ words}}
  \Longrightarrow
  \underbrace{A\subseteq D}_{4\cdot 320=1280\text{ words}}
\]
Sections~2--4 carry out these steps. The main external input is the framework of
Cohn and Li; apart from standard facts about the extended Golay code and its relation to
the Steiner system, everything else is a finite calculation in an explicit code of size
$4096$.

\section{The ambient punctured Golay code}

We identify a binary word with its support. Thus, for example, $\{6,7\}$ denotes the
vector of $\F_2^{19}$ with ones in coordinates $6$ and $7$, and zeros elsewhere.
Addition is always taken in $\F_2^{19}$; under the support notation it is the symmetric
difference of sets.

Consider the following twelve words of $\F_2^{19}$:
{\small
\[
\begin{array}{r@{\;=\;}l@{\qquad}r@{\;=\;}l}
m_1 & \{1,8,9,12,16,17,18,19\}, &
m_2 & \{2,10,11,14,15,17,18\}, \\
m_3 & \{3,7,9,13,15,16,17,19\}, &
m_4 & \{4,7,8,10,12,15,16,19\}, \\
m_5 & \{5,10,12,13,15,16,17,18\}, &
m_6 & \{6,7,8,9,10,13,16,18\}; \\
s_1 & \{1,4,7,9\}, &
s_2 & \{1,5,6,18\}, \\
s_3 & \{1,3,12,15\}, &
s_4 & \{1,10,13,19\}; \\
r_1 & \{1,3,5,6,7,13,14,15,18\}, &
r_2 & \{2,4,6,7,8,13,14,16,17,18\}.
\end{array}
\]
}
The six vectors $m_1,\dots,m_6$ will generate a subcode $M$, the classes of
$s_1,\dots,s_4$ will later provide coordinates on $K/M$, and $r_1,r_2$ will index the
four cosets of $K$ in $D$.

Let
\[
  M\coloneqq \Span\{m_1,\dots,m_6\}
\]
and
\[
  D\coloneqq \Span\{m_1,\dots,m_6,s_1,s_2,s_3,s_4,r_1,r_2\}\leq \F_2^{19}.
\]
The next proposition shows that this explicit code $D$ is a concrete coordinate model of
a 5-punctured extended binary Golay code. This is the only place where we compare our
coordinates with a standard Golay model.

\begin{proposition}\label{prop:golay}
Let $G$ be the $12\times 19$ matrix whose rows are
\[
  m_1,m_2,m_3,m_4,m_5,m_6,s_1,s_2,s_3,s_4,r_1,r_2
\]
in that order, and let
\[
  P\coloneqq
  \begin{pmatrix}
  0&0&0&0&0\\
  0&0&0&0&1\\
  0&0&0&0&0\\
  0&0&0&0&0\\
  0&0&0&0&0\\
  0&0&0&0&0\\
  1&1&1&1&0\\
  0&1&1&1&1\\
  1&0&1&1&1\\
  1&1&0&1&1\\
  1&0&1&0&1\\
  0&0&1&1&0
  \end{pmatrix}.
\]
Write $\widetilde G=[G\mid P]$ and let $\widetilde D\leq \F_2^{24}$ be the row span of
$\widetilde G$, with the last five columns labelled $20,21,22,23,24$. Then
$\widetilde D$ is a binary doubly-even self-dual $[24,12,8]$ code. In particular,
$\widetilde D$ is equivalent to the extended binary Golay code, and $D$ is obtained from
$\widetilde D$ by puncturing coordinates $20,21,22,23,24$. Hence $D$ is a 5-punctured
extended binary Golay code.
\end{proposition}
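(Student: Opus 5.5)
The plan is to verify three properties of $\widetilde D$ directly from the explicit generator matrix $\widetilde G$: it has dimension $12$, it is self-orthogonal, and every generator has weight divisible by $4$. These make $\widetilde D$ doubly even and self-dual. The minimum distance $8$ then comes from a separate argument, after which the classical uniqueness theorem for the Golay code finishes the proof.

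\textbf{Step 1: doubly even and self-orthogonal.} First I compute the weights of the twelve rows of $\widetilde G$ by adding the weight of each row of $G$ to the weight of the corresponding row of $P$. For example, $m_1$ has weight $8$ and its $P$-row is zero, so the total is $8$. For $m_2$, the weight $7$ plus $1$ gives $8$. For $s_1$, $4+4=8$. For $r_1$, $9+3=12$. For $r_2$, $10+2=12$. The remaining rows are checked the same way, and each row should have weight divisible by $4$.

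Next I check that all $\binom{12}{2}=66$ pairwise inner products vanish mod $2$. Each inner product is $|u\cap v|$ on the first $19$ coordinates plus the overlap of the corresponding rows of $P$. Since the rows are self-orthogonal and pairwise orthogonal, the identity $\wt(u+v)=\wt(u)+\wt(v)-2|u\cap v|$ shows by induction that every word of $\widetilde D$ has weight divisible by $4$.

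\textbf{Step 2: dimension $12$.} I would row-reduce $G$ itself. The pivot pattern is already visible: $m_i$ is the only $m$-row containing coordinate $i$ for $i\le 6$. After that, $s_1,\dots,s_4,r_1,r_2$ are reduced modulo the $m_i$ and checked to be independent on the remaining coordinates. This shows $\operatorname{rank} G=12$, and hence also $\operatorname{rank}\widetilde G=12$. A self-orthogonal code of length $24$ and dimension $12$ is self-dual. The same computation shows that puncturing coordinates $20$--$24$ is injective and maps $\widetilde D$ onto $D$, which is the puncturing claim.

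\textbf{Step 3: minimum distance $8$ (main obstacle).} Doubly-evenness excludes weights $2$ and $6$, so I only need to rule out codewords of weight $4$. One route is an explicit enumeration of the $4096$ codewords; this is the finite calculation the paper already relies on, and it is clean but brute-force.

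A more conceptual route uses the projection onto the last five coordinates, the map $x\mapsto xP$. A weight-$4$ word $w=(u\mid uP)$ must satisfy $\wt(u)+\wt(uP)=4$ with $\wt(uP)\le 4$. So it suffices to list the words of $D$ of weight at most $4$ and check that none has a $P$-tail of complementary weight.

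Here the weights must be bounded carefully. Self-duality plus the Gleason/MacWilliams constraint does not by itself exclude weight $4$. I would therefore enumerate the low-weight words of $D$, which number at most about $21$ plus $0$ by the abstract, over the $2^{12}$ combinations, organized as $M$-cosets to cut the work. For each such word I would confirm that its tail weight is wrong.

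\textbf{Conclusion.} Once $\widetilde D$ is shown to be a doubly-even self-dual $[24,12,8]$ code, the uniqueness theorem of Pless (see \cite{ConwaySloane}) gives equivalence with the extended Golay code. The puncturing statement was already established in Step 2.
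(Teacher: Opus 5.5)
Your proposal is correct and follows the paper's proof essentially step for step. The shared steps are: row reduction for $\dim\widetilde D=12$ and for injectivity of the puncturing; the check $\widetilde G\,\widetilde G^{\mathsf T}=0$ together with the row weights and the induction via $\wt(u+v)=\wt(u)+\wt(v)-2|u\cap v|$ for doubly-evenness; enumeration of the $4096$ codewords to exclude weight $4$; and the uniqueness theorem.

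Your alternative Step 3 route is also valid, but two points need care. First, the tail should be written $\lambda P$ where $u=\lambda G$, which is well defined because $\operatorname{rank}G=12$. Second, the route saves nothing: listing the words of $D$ of weight at most $4$ is itself a $4096$-word enumeration, and that enumeration, not the paper's later count of $21$, must supply the absence of weight-$1$ and weight-$2$ words, since the paper deduces that absence from this very proposition.
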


\begin{proof}
A row reduction shows that the twelve displayed generators are linearly independent, so
$\dim D=12$. Since puncturing the last five coordinates sends the rows of
$\widetilde G$ to the rows of $G$, the rows of $\widetilde G$ are linearly independent as
well, and therefore $\dim \widetilde D=12$.

A direct calculation gives
\[
  \widetilde G\,\widetilde G^{\mathsf T}=0,
\]
so $\widetilde D$ is self-orthogonal. Because $\widetilde D$ has length $24$ and dimension
$12$, it is in fact self-dual. Each row of $\widetilde G$ has weight $8$ or $12$, hence
weight divisible by $4$. Moreover, if $u$ and $v$ are orthogonal binary vectors whose
weights are divisible by $4$, then
\[
  \wt(u+v)=\wt(u)+\wt(v)-2|\operatorname{supp}(u)\cap\operatorname{supp}(v)|
\]
is again divisible by $4$. We now prove by induction on the number of summands that every
linear combination of rows of $\widetilde G$ has weight divisible by $4$. The case of one
summand is immediate. For the inductive step, suppose $w$ is a sum of some rows and has
weight divisible by $4$, and let $u$ be another row. Since $w,u\in\widetilde D$ and
$\widetilde D$ is self-orthogonal, we have $\langle w,u\rangle=0$. The displayed formula
then shows that $\wt(w+u)$ is divisible by $4$. Hence every codeword of $\widetilde D$
has weight divisible by $4$, so $\widetilde D$ is doubly even. Finally, enumerating the
$4096$ codewords of $\widetilde D$ shows that its minimum nonzero weight is $8$.
Therefore $\widetilde D$ is a binary doubly-even self-dual $[24,12,8]$ code. By the
uniqueness of such a code \cite{ConwaySloane}, $\widetilde D$ is equivalent to the
extended binary Golay code. Puncturing the last five coordinates recovers $D$.
\end{proof}

\begin{remark}\label{rem:ambient}
Let $G_{24}$ be the extended binary Golay code and let
\[
  J\coloneqq \{20,21,22,23,24\}.
\]
The code obtained by shortening $G_{24}$ on the coordinates in $J$ is generated by the
characteristic vectors of the octads of $G_{24}$ that vanish on $J$, equivalently by the
blocks of the 5-shortened Steiner system $S(5,8,24)$; see \cite[\S2]{CohnLi} and
\cite{ConwaySloane}. Standard duality for linear codes identifies the dual of this
shortened code with the puncturing of $G_{24}^\perp$ on the same coordinates. Since
$G_{24}$ is self-dual, the ambient orthogonal complement in \cite[\S2]{CohnLi} is
precisely the corresponding 5-punctured extended binary Golay code.
\end{remark}

\begin{remark}\label{rem:equiv}
By the uniqueness of the extended binary Golay code and the $5$-transitivity of its
automorphism group $M_{24}$ on coordinate positions \cite{ConwaySloane}, any two
$5$-punctured extended binary Golay codes are equivalent under a coordinate permutation.
In the odd-sign construction, permuting the $19$ coordinate axes of $\mathbb R^{19}$ is
an isometry of the whole ambient space, so it carries the entire kissing
configuration---both the original vectors and the added vectors $v(c)$---to an isometric
one. Thus Proposition~\ref{prop:golay} provides a sufficient coordinate model for the
ambient code used in \cite{CohnLi}.
\end{remark}

From now on we work entirely inside $D$.

\section{Low-weight words and the quotient structure}

We now analyze the words of weight $3$ or $4$ in $D$. By
Proposition~\ref{prop:golay}, the code $\widetilde D$ has minimum weight $8$, and
puncturing five coordinates can decrease weight by at most $5$. Hence every nonzero word
of $D$ has weight at least $3$. Therefore the only nonzero differences in $D$ whose
weight is below $5$ are precisely the words of weight $3$ or $4$.

Let $\Gamma$ be the graph on $D$ in which two vertices $x,y\in D$ are adjacent when
their Hamming distance is $3$ or $4$. Since $D$ is linear, adjacency is determined by the
set
\[
  S\coloneqq\bigl\{x\in D:\wt(x)\in\{3,4\}\bigr\}.
\]
Indeed, $x$ and $y$ are adjacent exactly when $x+y\in S$. Thus a subset of $D$ is an
independent set in $\Gamma$ if and only if it is a code of minimum distance at least $5$.

Let
\[
  K\coloneqq \Span(S).
\]
Thus $K$ is the linear span of the forbidden differences. For later use we also set
\[
  s_5\coloneqq \{3,5,7,10\}.
\]
The identity
\[
  s_5=s_1+s_2+s_3+s_4+m_4+m_6
\]
shows that
\[
  s_5+M=s_1+s_2+s_3+s_4+M.
\]

\begin{table}[H]
\centering
\small
\renewcommand{\arraystretch}{1.05}
\begin{tabularx}{\textwidth}{@{}l>{\raggedright\arraybackslash}X@{}}
\toprule
Coset in $K/M$ & Words of weight $3$ or $4$ in that coset \\
\midrule
$s_1+M$ & $\{2,11,14\}$, $\{1,4,7,9\}$, $\{3,6,8,19\}$, $\{5,12,13,16\}$, $\{10,15,17,18\}$ \\
$s_2+M$ & $\{1,5,6,18\}$, $\{3,9,13,17\}$, $\{4,8,10,12\}$, $\{7,15,16,19\}$ \\
$s_3+M$ & $\{1,3,12,15\}$, $\{4,5,17,19\}$, $\{6,9,10,16\}$, $\{7,8,13,18\}$ \\
$s_4+M$ & $\{1,10,13,19\}$, $\{3,4,16,18\}$, $\{5,8,9,15\}$, $\{6,7,12,17\}$ \\
$s_5+M$ & $\{1,8,16,17\}$, $\{3,5,7,10\}$, $\{4,6,13,15\}$, $\{9,12,18,19\}$ \\
\bottomrule
\end{tabularx}
\captionsetup{width=\textwidth}
\vspace{-8pt}
\caption{The 21 words of weight $3$ or $4$ in $D$. They lie in exactly five nonzero
$M$-cosets inside $K$.}
\label{tab:lowweight}
\end{table}

\begin{proposition}\label{prop:structure}
The following statements hold.
\begin{enumerate}
\item The set $S$ consists exactly of the 21 words listed in Table~\ref{tab:lowweight}.
\item We have
\[
  K=\Span(M,s_1,s_2,s_3,s_4),
\]
so $\dim M=6$ and $\dim K=10$.
\item The images of $s_1,s_2,s_3,s_4$ form a basis of $K/M$.
\item Under the basis from \textup{(3)}, a coset of $M$ in $K$ meets $S$ if and only if
its image in $K/M\cong\F_2^4$ belongs to
\[
  \Sigma\coloneqq \{e_1,e_2,e_3,e_4,e_1+e_2+e_3+e_4\}.
\]
\item The code $D$ is the disjoint union of the four cosets
\[
  K,\qquad K+r_1,\qquad K+r_2,\qquad K+r_1+r_2.
\]
\end{enumerate}
\end{proposition}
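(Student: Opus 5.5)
The proof is a finite verification inside the explicit $4096$-word code $D$, organized so that each item follows from a small computation together with linear algebra. For (1), enumerate the $2^{12}$ linear combinations of the twelve generators; Proposition~\ref{prop:golay} already gives $\dim D=12$, so each word appears exactly once. Then list those of weight $3$ or $4$ and compare with Table~\ref{tab:lowweight}. As a consistency check we can count through the Golay model. A word of weight $3$ or $4$ in $D$ lifts uniquely (by self-duality of $\widetilde D$ and distinctness of lifts) to a word of $\widetilde D$ of weight $8$, since weight $12$ would need at least $8$ ones on $J$. So the lift is an octad meeting $J$ in $5$ or $4$ points. By the Steiner property, exactly one octad contains $J$, which gives the single weight-$3$ word $\{2,11,14\}$. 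Each of the five $4$-subsets of $J$ lies in $\binom{20}{1}\cdot\frac{1}{5}\cdot\ldots$ octads; more precisely, the number of octads meeting $J$ in exactly a given $4$-set is $\lambda_4-\lambda_5=5-1=4$. This gives $5\cdot4=20$ words of weight $4$, for $21$ in total, matching the table.

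For (2) and (3), first check by row reduction that $m_1,\dots,m_6$ are independent, which gives $\dim M=6$. They are in fact part of the independent generating set of $D$, so this is immediate from Proposition~\ref{prop:golay}. Next, for each of the $21$ words $w\in S$, exhibit (by solving a linear system over $\F_2$) an expression of $w$ as $s_i+{}$(element of $M$) for the $s_i$ indicated by its row. For the last row, use $s_1+s_2+s_3+s_4+{}$(element of $M$), relying on the stated identity for $s_5$. This gives $S\subseteq\Span(M,s_1,\dots,s_4)$. Conversely, each $s_i$ appears in $S$, and each $m_j$ should be a sum of elements of $S$. The easiest route is to note that $K\supseteq s_1+M$ contains the differences of pairs of words in that row, and then check that these differences together with those from the other rows span $M$; this is again a rank computation. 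Independence of $m_1,\dots,m_6,s_1,\dots,s_4$ also follows from Proposition~\ref{prop:golay}, which gives $\dim K=10$. Then (3) follows, since the images of the $s_i$ span a $4$-dimensional quotient.

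For (4), the table placement from (2) shows that $S$ meets exactly the cosets with images $e_1,\dots,e_4$ and $e_1+e_2+e_3+e_4$. Since by (1) the table exhausts $S$, no other coset meets $S$. For (5), $r_1,r_2$ together with a basis of $K$ form the full independent generating set of $D$, so $r_1,r_2$ are independent modulo $K$. Hence $D/K\cong\F_2^2$ has coset representatives $0,r_1,r_2,r_1+r_2$, and these cosets are disjoint.

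The main obstacle is step (2): showing that $M\subseteq\Span(S)$. This requires an explicit rank computation and does not follow from the counting argument. I would first try pairwise sums within each row of Table~\ref{tab:lowweight} to produce six independent elements of $M$.
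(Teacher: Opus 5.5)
Your proposal is correct and follows essentially the same route as the paper. The steps line up as follows: enumeration of the $4096$ words for (1); membership of each table word in its coset $s_i+M$ to get $S\subseteq\Span(M,s_1,\dots,s_4)$; sums of pairs of table words within a row to get $M\le K$ (the paper exhibits exactly such identities, e.g.\ $m_1=\{1,8,16,17\}+\{9,12,18,19\}$); and independence of the twelve generators for the dimension counts and for (5). Your Steiner-system count $1+5(\lambda_4-\lambda_5)=21$ is a useful consistency check that the paper does not include. One small phrasing slip: before $M\le K$ is established, say that the row words lie in $K$ and their pairwise sums lie in $M$, rather than writing $K\supseteq s_1+M$.
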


\begin{proof}
Set
\[
  K_0\coloneqq \Span(M,s_1,s_2,s_3,s_4).
\]
We prove the five statements in stages.

\smallskip
\noindent\textit{Step 1: dimensions and the four cosets of $K_0$ in $D$.}
A row reduction on the twelve displayed generators shows that $m_1,\dots,m_6$ are
linearly independent, that the classes of $s_1,s_2,s_3,s_4$ are linearly independent
modulo $M$, and that $r_1,r_2$ are linearly independent modulo $K_0$. Therefore
\[
  \dim M=6, \qquad \dim K_0=10,
\]
and $D$ is the disjoint union of the four cosets
\[
  K_0,\qquad K_0+r_1,\qquad K_0+r_2,\qquad K_0+r_1+r_2.
\]

\smallskip
\noindent\textit{Step 2: the low-weight words.}
Enumerating the $4096$ words of $D$ shows that the words of weight $3$ or $4$ are exactly
the 21 words listed in Table~\ref{tab:lowweight}. This proves \textup{(1)}. In particular,
every element of $S$ lies in one of the five nonzero cosets $s_i+M$ for $1\le i\le 5$,
and no element of $M$ has weight $3$ or $4$.

\smallskip
\noindent\textit{Step 3: the subcode $M$ lies in $K$.}
The following identities express each generator of $M$ as a sum of two words from
Table~\ref{tab:lowweight}:
\begin{align*}
  m_1&=\{1,8,16,17\}+\{9,12,18,19\}, &
  m_2&=\{2,11,14\}+\{10,15,17,18\}, \\
  m_3&=\{3,9,13,17\}+\{7,15,16,19\}, &
  m_4&=\{4,8,10,12\}+\{7,15,16,19\}, \\
  m_5&=\{5,12,13,16\}+\{10,15,17,18\}, &
  m_6&=\{6,9,10,16\}+\{7,8,13,18\}.
\end{align*}
Hence $M\le K$. Since $s_1,s_2,s_3,s_4\in S$, we also have $K_0\le K$.

\smallskip
\noindent\textit{Step 4: identifying $K$.}
Conversely, Table~\ref{tab:lowweight} shows that every element of $S$ lies in one of the
five cosets $s_i+M$. The identity defining $s_5$ shows that $s_5\in K_0$, so in fact
$S\subseteq K_0$. Therefore $K\le K_0$. Together with $K_0\le K$, this gives
$K=K_0$. This proves \textup{(2)}.

\smallskip
\noindent\textit{Step 5: the quotient $K/M$ and the four cosets of $K$ in $D$.}
Statement \textup{(3)} follows from the row reduction in Step~1. Since
\[
  s_5+M=s_1+s_2+s_3+s_4+M,
\]
the five nonzero cosets of $M$ in $K$ that meet $S$ are precisely those with images
\[
  e_1,\ e_2,\ e_3,\ e_4,\ e_1+e_2+e_3+e_4
\]
in the basis of $K/M$ given by the images of $s_1,s_2,s_3,s_4$, that is, those whose
images lie in $\Sigma$. This proves \textup{(4)}.
Finally, since $K=K_0$, the four cosets found in Step~1 are exactly the four cosets of
$K$ in $D$, proving \textup{(5)}.
\end{proof}

Thus all 21 forbidden differences are concentrated in just five of the 16 cosets of $M$
in $K$. The next lemma packages this finite datum as a 16-vertex graph with connection
set $\Sigma$.

\begin{lemma}\label{lem:clebsch}
Define an auxiliary graph $\mathcal Q$ on $K/M$ by declaring two cosets $a+M$ and $b+M$
adjacent when there exist $x\in a+M$ and $y\in b+M$ that are adjacent in $\Gamma|_K$.
Then
\[
  \mathcal Q=\Cay(K/M,\Sigma)\cong \Cay(\F_2^4,\Sigma),
\]
where
\[
  \Sigma=\{e_1,e_2,e_3,e_4,e_1+e_2+e_3+e_4\}.
\]
In particular, $\mathcal Q$ is the standard Cayley model of the Clebsch graph.
\end{lemma}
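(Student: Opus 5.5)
The plan is to unwind the definition of $\mathcal Q$ and reduce adjacency to a statement about differences of cosets, then invoke Proposition~\ref{prop:structure}(4). For cosets $a+M$ and $b+M$, the pairs $(x,y)$ with $x\in a+M$ and $y\in b+M$ produce exactly the differences $x+y$ lying in the single coset $a+b+M$. Indeed, $x+y\in a+b+M$ always, and conversely every $z\in a+b+M$ arises: take $x=a$ and $y=a+z$, which lies in $b+M$. Since $x$ and $y$ are adjacent in $\Gamma|_K$ exactly when $x+y\in S$ (and $S\subseteq K$, so restricting to $K$ loses nothing), $a+M$ and $b+M$ are adjacent in $\mathcal Q$ if and only if the coset $a+b+M$ meets $S$.

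Next I apply Proposition~\ref{prop:structure}(4). Under the identification $K/M\cong\F_2^4$ given by the basis $s_1+M,\dots,s_4+M$ from Proposition~\ref{prop:structure}(3), the coset $a+b+M$ meets $S$ if and only if its image lies in $\Sigma$. Hence $a+M\sim b+M$ if and only if $(a+M)-(b+M)\in\Sigma$, which is precisely the definition of $\Cay(K/M,\Sigma)$. A few routine checks remain:
\begin{itemize}
\item $0\notin\Sigma$, so there are no loops. This is consistent with $M\cap S=\emptyset$ from Step~2 of the proof of Proposition~\ref{prop:structure}.
\item $\Sigma=-\Sigma$ holds trivially in characteristic $2$, so the Cayley graph is undirected.
\end{itemize}
The isomorphism with $\Cay(\F_2^4,\Sigma)$ is then just the coordinate isomorphism itself.

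For the final claim, I would recall or verify that $\Cay(\F_2^4,\{e_1,e_2,e_3,e_4,e_1+e_2+e_3+e_4\})$ is the Clebsch graph, namely the folded $5$-cube. The five generators sum to zero and any four of them are independent, so the graph is the quotient of the $5$-cube $Q_5=\Cay(\F_2^5,\{e_1,\dots,e_5\})$ by the antipodal map. This quotient is the standard description of the Clebsch graph, a $5$-regular graph on $16$ vertices with parameters $(16,5,0,2)$. As a quick sanity check, the graph is triangle-free because no three elements of $\Sigma$ sum to $0$. Any two non-adjacent vertices have exactly two common neighbours, because an element at distance two is written as $\sigma+\tau$ in exactly two ordered ways, giving one unordered pair. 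Depending on the definition of the Clebsch graph adopted, I would either cite \cite{ConwaySloane}-style standard facts or present this short strongly-regular check.

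I do not expect any real obstacle. The only point needing care is the first step: $\mathcal Q$-adjacency is defined existentially over representatives, while the Cayley description requires the difference coset to meet $S$. The surjectivity argument (taking $x=a$ and $y=a+z$) is what makes these two conditions agree. Everything else is inherited from Proposition~\ref{prop:structure}.
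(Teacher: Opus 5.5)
Your proof is correct and follows essentially the same route as the paper: both reduce $\mathcal Q$-adjacency to the condition that the coset $(a+b)+M$ meets $S$, by shifting a representative by an element $z$ of that coset. Both then invoke Proposition~\ref{prop:structure}(4) and use $S\cap M=\varnothing$ to rule out loops. Your folded-$5$-cube and $(16,5,0,2)$ check goes slightly beyond the paper, which simply asserts that $\Cay(\F_2^4,\Sigma)$ is the standard model of the Clebsch graph.
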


\begin{proof}
Take $x\in a+M$ and $y\in b+M$. Then $x+y\in (a+b)+M$. Conversely, if
\[
  z\in \bigl((a+b)+M\bigr)\cap S
\]
and $x\in a+M$ is arbitrary, then $y\coloneqq x+z$ lies in $b+M$ and satisfies
$x+y=z\in S$. Thus
\[
  \bigl((a+b)+M\bigr)\cap S\ne\varnothing
  \Longleftrightarrow
  \exists\,x\in a+M,\ \exists\,y\in b+M \text{ with } x+y\in S.
\]
By Proposition~\ref{prop:structure}\textup{(4)}, the left-hand condition is equivalent to
$(a+b)+M\in \Sigma$. Therefore
\[
\begin{aligned}
  (a+M)\sim (b+M)\text{ in }\mathcal Q
  &\Longleftrightarrow \bigl((a+b)+M\bigr)\cap S\ne\varnothing \\
  &\Longleftrightarrow (a+b)+M\in \Sigma.
\end{aligned}
\]
Because $S\cap M=\varnothing$, no vertex is adjacent to itself. Hence
\[
  \mathcal Q=\Cay(K/M,\Sigma)\cong \Cay(\F_2^4,\Sigma).
\]
This is the standard 16-vertex Cayley model of the Clebsch graph.
\end{proof}

\begin{remark}
The projection $K\to K/M$ is in fact a graph homomorphism from $\Gamma|_K$ to
$\mathcal Q$: if $x$ and $y$ are adjacent in $\Gamma|_K$, then $x+y\in S$, so
$(x+y)+M\in \Sigma$ by Proposition~\ref{prop:structure}\textup{(4)}, and hence $x+M$ and
$y+M$ are adjacent in $\mathcal Q$. Also, because $S\cap M=\varnothing$, no edge of
$\Gamma|_K$ lies inside a single $M$-coset. What $\mathcal Q$ records is only the
existence of at least one edge between two $M$-cosets; it does not assert that every pair
of lifts across those cosets is adjacent. This existence-only quotient is exactly what is
needed for lifting cocliques from $K/M$ back to $K$.
\end{remark}

\section{Lifting a coclique to a 1280-word code}

The auxiliary graph $\mathcal Q$ tells us which differences between $M$-cosets are
forbidden inside $K$. A coclique in $\mathcal Q$ therefore lifts to a union of $M$-cosets
in $K$ with no edges of $\Gamma|_K$.

\begin{lemma}\label{lem:Icoclique}
Let
\[
  I\coloneqq \Sigma
  =\{e_1,e_2,e_3,e_4,e_1+e_2+e_3+e_4\}\subseteq K/M\cong\F_2^4.
\]
Then $I$ is a 5-coclique in $\mathcal Q$.
\end{lemma}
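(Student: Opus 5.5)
By Lemma~\ref{lem:clebsch}, $\mathcal Q=\Cay(K/M,\Sigma)$. So two distinct vertices $u,v\in K/M\cong\F_2^4$ are adjacent exactly when $u+v\in\Sigma$. To show that $I=\Sigma$ is a coclique, I will therefore check that no sum of two distinct elements of $\Sigma$ lies in $\Sigma$; that is, $(\Sigma+\Sigma)\cap\Sigma=\varnothing$ once the zero sums from $u=v$ are excluded.

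I will split the pairs $\{u,v\}\subseteq\Sigma$ with $u\ne v$ into two types.
\begin{enumerate}
\item If both are standard basis vectors, $u=e_i$ and $v=e_j$ with $i\ne j$, then $u+v=e_i+e_j$ has Hamming weight $2$ in $\F_2^4$.
\item If one of them is $e_1+e_2+e_3+e_4$ and the other is $e_i$, then the sum is the sum of the three $e_k$ with $k\ne i$, which has weight $3$.
\end{enumerate}
Every element of $\Sigma$ has weight $1$ or $4$ with respect to the basis $e_1,\dots,e_4$. Hence no pairwise sum lies in $\Sigma$, and no two distinct elements of $I$ are adjacent in $\mathcal Q$. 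The five elements of $\Sigma$ are distinct, so $|I|=5$ and $I$ is a $5$-coclique.

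There is no real obstacle here. The only point needing care is the identification of $K/M$ with $\F_2^4$ through the basis $s_1+M,\dots,s_4+M$ from Proposition~\ref{prop:structure}(3). This ensures that "weight" means coordinate weight in that basis and that $\Sigma$ is exactly the set of images described in Proposition~\ref{prop:structure}(4). Once that is fixed, the argument is the weight count above. Equivalently, $\Sigma$ is the neighbourhood of $0$ in the Clebsch graph, and the Clebsch graph is triangle-free.
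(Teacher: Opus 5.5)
Your proof is correct and is essentially the paper's argument: both use $\mathcal Q=\Cay(K/M,\Sigma)$ to reduce the claim to checking that $\Sigma$ is sum-free, splitting pairs into two basis vectors versus a basis vector together with $e_1+e_2+e_3+e_4$. Your weight bookkeeping (pairwise sums have weight $2$ or $3$, while elements of $\Sigma$ have weight $1$ or $4$) is just a tidy way of phrasing that same check.
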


\begin{proof}
The set $I$ is sum-free in $\F_2^4$: the sum of two distinct basis vectors is not in
$\Sigma$, and the sum of a basis vector with $e_1+e_2+e_3+e_4$ is the sum of the other
three basis vectors, which is also not in $\Sigma$. Since two vertices $u,v$ of
$\Cay(\F_2^4,\Sigma)$ are adjacent exactly when $u+v\in \Sigma$, it follows that no two
distinct elements of $I$ are adjacent. Hence $I$ is a 5-coclique in $\mathcal Q$.
\end{proof}

\begin{proposition}\label{prop:Acode}
Let
\[
  B\coloneqq (s_1+M)\cup(s_2+M)\cup(s_3+M)\cup(s_4+M)\cup(s_5+M)
\]
and
\[
  A\coloneqq B\cup(B+r_1)\cup(B+r_2)\cup(B+r_1+r_2).
\]
Then $A\subseteq D$ has size $1280$ and minimum distance $5$.
\end{proposition}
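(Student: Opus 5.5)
The plan has two parts: count $A$ using the coset structure from Proposition~\ref{prop:structure}, then reduce the distance claim to showing that no difference of two elements of $A$ lies in $S$.

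\emph{Size.} The five cosets $s_i+M$ ($1\le i\le 5$) are pairwise distinct. Indeed, by Proposition~\ref{prop:structure}(3),(4) their images in $K/M$ are the five distinct elements of $\Sigma$. Since $|M|=64$ by Proposition~\ref{prop:structure}(2), this gives $|B|=5\cdot 64=320$. The set $B$ lies in $K$, so the four translates $B$, $B+r_1$, $B+r_2$, $B+r_1+r_2$ lie in the four distinct cosets of $K$ in $D$ listed in Proposition~\ref{prop:structure}(5). These translates are therefore pairwise disjoint, and $|A|=4\cdot 320=1280$. All of them lie in $D$, since $r_1,r_2\in D$.

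\emph{Distance.} Take distinct $x,y\in A$. Then $x+y\in D$ is nonzero, so $\wt(x+y)\ge 3$ by the discussion opening Section~3. It therefore suffices to show $x+y\notin S$. Write $x=b+t$ and $y=b'+t'$ with $b,b'\in B$ and $t,t'\in\{0,r_1,r_2,r_1+r_2\}$.

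\textbf{Case $t\ne t'$.} Here $t+t'\notin K$, again by Proposition~\ref{prop:structure}(5), so $x+y\in (t+t')+K$ lies outside $K$. But $S\subseteq K$ by the definition of $K$. Hence $x+y\notin S$.

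\textbf{Case $t=t'$.} Here $x+y=b+b'$. Its image in $K/M$ is $u+v$, where $u,v\in I=\Sigma$ are the images of $b,b'$. If $u=v$, then $b+b'\in M$, and $M\cap S=\varnothing$ by Proposition~\ref{prop:structure}(4), since $0\notin\Sigma$. If $u\ne v$, then by the sum-freeness established in Lemma~\ref{lem:Icoclique}, $u+v\notin\Sigma$, so $b+b'$ lies in an $M$-coset that does not meet $S$ (Proposition~\ref{prop:structure}(4)). Equivalently, $b+M$ and $b'+M$ are nonadjacent in $\mathcal Q$ by Lemmas~\ref{lem:clebsch} and~\ref{lem:Icoclique}.

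In every case $\wt(x+y)\ge 5$, so the minimum distance of $A$ is at least $5$. To get equality, exhibit two codewords at distance exactly $5$, for instance by finding a weight-$5$ word of $M$ (then $x$ and $x+m$ for any $x\in A$ work). If no such word is at hand, a finite enumeration over $A$ settles it. This equality is the only step that is not purely structural, and the main subtle point is making sure that $S\subseteq K$ fully handles the cross-coset differences. That is immediate from the definition of $K$, so no real obstacle is expected.
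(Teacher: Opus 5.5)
Your count of $|A|=1280$ and your proof that the minimum distance is at least $5$ are correct. They follow the paper's proof essentially step by step: $S\subseteq K$ rules out differences between distinct $K$-cosets, and inside one $K$-coset the sum-freeness of $\Sigma$ together with $S\cap M=\varnothing$ rules out differences in $S$. One small citation point: Proposition~\ref{prop:structure}(4) alone does not identify the image of $s_5$. That comes from the identity $s_5+M=s_1+s_2+s_3+s_4+M$ stated before Table~\ref{tab:lowweight}.

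The gap is the claim that the minimum distance is \emph{exactly} $5$. The witness you propose, a weight-$5$ word of $M$, does not exist. In Proposition~\ref{prop:golay} the rows of $P$ for $m_1,\dots,m_6$ are all zero, except the row for $m_2$, which is $(0,0,0,0,1)$. So every nonzero $m\in M$ arises from a nonzero word of $\widetilde D$, of weight at least $8$, by deleting at most one nonzero coordinate. Hence $\wt(m)\ge 7$. More generally, a weight-$5$ word of $D$ must come from an octad meeting $\{20,\dots,24\}$ in exactly three points, so the witness has to be sought outside $M$. Your fallback, that a finite enumeration settles it, is not an argument unless it actually exhibits a pair, so as written the equality is unproved. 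The paper closes this with an explicit pair: $s_2\in B$ and $s_3+r_1\in B+r_1$ differ by $s_2+s_3+r_1=\{1,7,12,13,14\}$, which has weight $5$. Adding that one line completes your proof.
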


\begin{proof}
Because the five cosets $s_i+M$ are distinct, we have
\[
  |B|=5|M|=5\cdot 64=320.
\]
We first show that $B$ is independent in $\Gamma|_K$. Suppose instead that $x,y\in B$
are adjacent in $\Gamma|_K$. Then $x+y\in S$, so by
Proposition~\ref{prop:structure}\textup{(4)} we have
\[
  (x+M)+(y+M)=(x+y)+M\in \Sigma.
\]
But $x+M$ and $y+M$ are both vertices of the coclique $I$. If $x+M=y+M$, then
$(x+M)+(y+M)=0\notin \Sigma$; if $x+M\ne y+M$, then Lemma~\ref{lem:Icoclique} implies
that $(x+M)+(y+M)\notin \Sigma$. Either way we obtain a contradiction. Hence $B$ is
independent in $\Gamma|_K$, so $B$ is a 320-word code in $K$ with minimum distance at
least $5$.

Now consider the four translates of $B$ used to define $A$. By
Proposition~\ref{prop:structure}\textup{(5)}, these translates lie in the four distinct
cosets of $K$ in $D$. Moreover, every edge of $\Gamma$ has difference in $S\subseteq K$,
so vertices lying in different cosets of $K$ are never adjacent in $\Gamma$. Therefore
$A$ is independent in $\Gamma$, and hence $A$ is a code in $D$ with minimum distance at
least $5$. Its size is
\[
  |A|=4|B|=4\cdot 320=1280.
\]
The minimum distance is exactly $5$, since $s_2\in A$ and $s_3+r_1\in A$ differ by the
weight-5 word $\{1,7,12,13,14\}$.
\end{proof}

This completes the coding-theoretic part of the argument: we have produced an explicit
code $A\subseteq D$ of size $1280$ and minimum distance $5$.

\section{Consequence for kissing numbers}

For $c\in D$, let
\[
  v(c)\coloneqq \sqrt{\frac{8}{19}}\,\bigl((-1)^{c_1},\dots,(-1)^{c_{19}}\bigr)
  \in\mathbb R^{19}.
\]
Let $C\le \F_2^{19}$ be the binary code generated by the blocks of the 5-shortened
Steiner system. Section~2 of \cite{CohnLi} shows that the odd-sign construction in
dimension $19$ yields a kissing configuration of size $10668$, and that one may adjoin
the vectors $v(c)$ for all $c$ in any code contained in $C^\perp$ whose minimum distance
is at least $5$. By Remark~\ref{rem:ambient}, the code $C^\perp$ is a 5-punctured
extended binary Golay code. Proposition~\ref{prop:golay} identifies our explicit code
$D$ with such a punctured Golay code, and Remark~\ref{rem:equiv} shows that this choice
of coordinates is sufficient. Proposition~\ref{prop:Acode} provides a code $A\subseteq
D$ of size $1280$ and minimum distance $5$. Applying the construction to $A$ therefore
adds $1280$ further kissing vectors, so
\[
  k(19)\ge 10668+1280=11948.
\]
This proves Theorem~\ref{thm:main}.

\begin{remark*}
Every computational assertion in Sections~2--4 is a finite check in codes of size $4096$:
row reduction of a $12\times 19$ matrix and a $12\times 24$ matrix, enumeration of the
$4096$ words of $D$ and of $\widetilde D$, extraction of the 21 words of weight $3$ or $4$
in $D$, and verification that the 1280-word code $A$ has minimum distance $5$. Supporting
Python scripts and data files, including explicit coordinates for the resulting
11948-point kissing configuration, are available at
\url{https://github.com/boonsuan/kissing}.
\end{remark*}

\section*{Acknowledgements}
GPT-5.4 Pro was used in the discovery of the construction and in revising the exposition.

\end{document}